\newtheorem{thm}{Theorem}
\newtheorem{lem}[thm]{Lemma}
\newtheorem{prop}[thm]{Proposition}
\newtheorem{cor}[thm]{Corollary}
\newtheorem{dfn}[thm]{Definition}
\newtheorem{rmk}[thm]{Remark}
\def\<{\langle}
\def\>{\rangle}
\def\id{\mathop{\rm id }\nolimits}
\begin{document}

\title{On extendability of functionals on Hilbert $C^*$-modules}
\author{V. Manuilov}

\date{}

\address{Moscow Center for Fundamental and Applied Mathematics, Moscow State University,
Leninskie Gory 1, Moscow, 119991, Russia\\ \newline \indent Harbin Institute of Technology, Xida St. 92, Harbin, Heilongjiang, 150001, China}

\email{manuilov@mech.math.msu.su}

\thanks{The research was supported by RSF, project No. 21-11-00080}

\subjclass[2010]{46L08}

\maketitle

\begin{abstract}
Let $M\subset N$ be Hilbert $C^*$-modules over a $C^*$-algebra $A$ with $M^\perp=0$. It
was shown recently by J. Kaad and M. Skeide that there exists a non-zero $A$-valued functional on $N$
such that its restriction onto $M$ is zero. Here we show that this may happen even if $A$ is
monotone complete. On the other hand, we show that for certain type I $W^*$-algebras this
cannot happen.

\end{abstract}

\section{Introduction}

Hilbert $C^*$-modules were introduced by W. Paschke in \cite{Pas1} and became a useful tool in
various aspects of noncommutative geometry and topology. The idea behind this notion is
to generalize the notion of a Hilbert space by replacing the field of the complex numbers,
both as scalars and as the range for the inner product, by a $C^*$-algebra $A$. For details about
Hilbert $C^*$-modules we refer to \cite{Lance,MTBook,MT}. In some aspects, Hilbert $C^*$-modules behave
like Hilbert spaces, but there are three major differences.

First, while any element $m\in M$ of a Hilbert $C^*$-module $M$ determines an $A$-antilinear
map $x\mapsto \langle x,m\rangle = \widehat m(x)$, $\widehat m : M \to A$, not any bounded $A$-antilinear map from $M$ to $A$ is
of this form, i.e. there is no Riesz theorem. The $A$-module of all bounded $A$-antilinear maps
is called the dual module and is denoted by $M'$. The map $m\mapsto \widehat m$ determines an isometric
inclusion $M\subset M'$, which is often not surjective. As a corollary, bounded $A$-linear operators
on Hilbert $C^*$-modules need not have an adjoint. This explains the interest in special classes
of Hilbert $C^*$-modules like self-dual or reflexive ones.

The dual module $M'$ often has no Hilbert $C^*$-module structure. Nevertheless, when the
underlying $C^*$-algebra $A$ is a $W^*$-algebra, or, slightly more generally, a monotone complete
$C^*$-algebra, the inner product $\langle\cdot,\cdot\rangle$ extends from $M$ to $M'$ in a natural way, \cite{F_1995}.

Second, while there is only one (up to an isometric isomorphism) separable Hilbert space
$l_2(\mathbb C)$, a countably generated Hilbert $C^*$-module over $A$ need not be isometrically isomorphic
to the standard Hilbert $C^*$-module $l_2(A)$.

Third, a Hilbert $C^*$-submodule $M\subset N$ in a Hilbert $C^*$-module $N$ need not be (orthogonally)
complementable. It may easily happen that the orthogonal complement $M^\perp$ is zero,
while $M$ is strictly smaller than $N$. We call such submodules {\it thick}.

Any element $n\subset N$ determines a functional $\widehat n|_M$ on $M$ given by $\widehat n|_M(m)=\langle m, n\rangle$. A
long-standing problem was to check whether the norms $\|n\|$ and $\|\widehat n|_M\|$ coincide. If $A$ is a
$W^*$-algebra and if $N=M'$ (in this case $M'$ is a Hilbert $C^*$-module) then it was shown in
\cite{Pas1} that these two norms are the same.

Recently, a surprising counterexample appeared in \cite{KS}, showing that for $A=C[0; 1]$ and
for $M=l_2(A)$, there exists $N$ such that $M$ is thick in $N$, and the two norms differ. As
a concequence, it was shown in \cite{KS} that there exist non-zero functionals on $N$ with zero
restriction onto $M$, thus showing that thick submodules may be not so thick.

Hilbert $C^*$-modules over $C[0,1]$ are reflexive, i.e. the second dual module $M''$ coincides with $M$, but the first dual module is not a Hilbert one, which is a reason behind a lot of counterexamples. In this paper, we focus on the class of Hilbert $C^*$-modules, for which $M'=M''$ is a Hilbert $C^*$-module. In particular, this is the case when $A$ is a monotone complete $C^*$-algebra \cite{F_1995}.

We show that even when $A$ is monotone complete, the two norms may be different. On the other hand, we show that if $A=L^\infty(X)$ is a commutative $W^*$-algebra or $A=\mathbb B(H)$ is the algebra of all bounded operators on a Hilbert
space then the two norms coincide. We have no idea on what happens when $A$ is a general
$W^*$-algebra.

In order to compare the norms, we associate to each element $f$ of the dual module, a right
ideal $J_f$ in $A$ and show that the size of this ideal is related to the coincidence of the two
norms.

The author is grateful to M. Frank and E. Troitsky for fruitful discussions.

\section{Hilbert $C^*$-modules with Hilbert dual}

Recall that the map $m\mapsto \widehat m$, $M\to M'$, defines an isometric inclusion, so we may identify $M$ with a subset in $M'$ via this map.

\begin{dfn}
A Hilbert $C^*$-module $M$ over a $C^*$-algebra $A$ is called a \textit{module with a Hilbert dual} if the inner product $\langle\cdot,\cdot\rangle$ on $M$ extends to an inner product $\langle\cdot,\cdot\rangle'$ on $M'$ such that $\langle \widehat m,\widehat n\rangle'=\langle m,n\rangle$ and $f(m)=\langle \widehat m,f\rangle'$ for any $m,n\in M$ and any $f\in M'$.

\end{dfn}

\begin{lem}
Let $M$ be a Hilbert $C^*$-module with a Hilbert dual. Then $M'$ is self-dual.

\end{lem}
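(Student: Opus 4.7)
The plan is to show that the canonical $A$-linear isometric inclusion $\iota : M' \to (M')'$ given by $\iota(f)(g) := \langle g, f \rangle'$ is surjective. Given $F \in (M')'$, I would construct a candidate $f \in M'$ by restricting $F$ to $\widehat M$: define $f : M \to A$ by $f(m) := F(\widehat m)$. Since $m \mapsto \widehat m$ is $A$-linear and isometric while $F$ is bounded and $A$-antilinear, $f$ is bounded and $A$-antilinear, hence $f \in M'$ with $\|f\| \le \|F\|$.

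By the reproducing identity, $F(\widehat m) = f(m) = \langle \widehat m, f \rangle' = \iota(f)(\widehat m)$ for all $m \in M$, so the difference $H := F - \iota(f) \in (M')'$ vanishes on $\widehat M \subset M'$. The remaining step, which is the main content of the lemma, is to promote this to $H \equiv 0$ on all of $M'$. To do so I would pick an arbitrary $g \in M'$ and try to bound $H(g)^* H(g)$ via the Cauchy--Schwarz inequality
$$H(g)^* H(g) \le \|H\|^2 \langle g - \widehat m \cdot a, g - \widehat m \cdot a \rangle'$$
(using $H(g - \widehat m \cdot a) = H(g)$ since $H$ kills $\widehat M$), then minimise the right-hand side over $m \in M$ and $a \in A$. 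The reproducing identity $\langle \widehat m, g \rangle' = g(m)$ suggests a natural choice of $a$ as a function of $m$, reducing the question to a positivity argument in $A_+$.

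The main obstacle is precisely this last step. As the introduction stresses, thickness of a submodule alone ($\widehat M^\perp = 0$ in $M'$, which here follows immediately from the reproducing identity, since $\langle \widehat m, g \rangle' = 0$ for all $m$ forces $g(m) = 0$ and hence $g = 0$) does not in general force a functional vanishing on that submodule to be zero. Hence the proof must genuinely use the coupling between $\widehat M$ and $M'$ provided by the reproducing identity, not merely its orthogonality consequence. Since the hypothesis includes neither monotone completeness of $A$ nor any other topological strengthening, I expect the proof to be a purely algebraic positivity argument of the type sketched, rather than a weak-approximation argument in the style of Paschke's original construction.
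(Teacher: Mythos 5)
Your reduction is the right one, and it matches the first half of the paper's argument: restricting $F\in (M')'$ to $\widehat M$ produces $f\in M'$, and the reproducing identity $f(m)=\langle \widehat m,f\rangle'$ shows $H=F-\iota(f)$ vanishes on $\widehat M$, so everything hinges on the claim that a bounded $A$-module functional on $M'$ which vanishes on $\widehat M$ is identically zero --- equivalently, that the canonical restriction map $M''\to M'$ is injective. But you do not prove this claim; you only describe how you ``would try'' to prove it and what kind of argument you ``expect''. Since this is, as you yourself say, the main content of the lemma, the proposal has a genuine gap: the essential step is missing.

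Moreover, the specific route you sketch cannot close it. Cauchy--Schwarz gives $H(g)^*H(g)\le \|H\|^2\langle g-\widehat m a,\,g-\widehat m a\rangle'$, and since $\widehat m a=\widehat{ma}$, minimising the right-hand side over $m\in M$ and $a\in A$ only yields the norm distance from $g$ to the submodule $\widehat M$, which is closed in $M'$ (the embedding is isometric and $M$ is complete) and hence at positive distance from every $g\notin\widehat M$; and $\widehat M\neq M'$ is exactly the nontrivial case. So this line of attack gives nothing beyond $\|H(g)\|\le\|H\|\,\mathrm{dist}(g,\widehat M)$, i.e.\ boundedness. The paper proceeds differently: it invokes the standard fact about dual modules that for \emph{any} Hilbert $C^*$-module the restriction map gives an inclusion $M''\subset M'$ --- this is where the ``vanishing on $M$ forces vanishing'' content lives, and it is a property of $M'$ with its functional norm, in contrast with Kaad--Skeide-type examples where the ambient norm is strictly larger than the functional norm --- and pairs it with the canonical inclusion $M'\subset M''$, which exists because $M'$ is itself a Hilbert module; the reproducing identity makes the composite $M'\to M''\to M'$ the identity, whence $M''=M'$. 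To repair your proof you must either quote that embedding result from the dual-module literature or actually prove it; a distance-minimisation positivity argument of the kind you propose will not do it.
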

\begin{proof}
The identity map extends to the inclusion $M''\subset M'$ for any Hilbert $C^*$-module. If $M'$ is a Hilbert $C^*$-module then the identity map extends to the inclusion $M'\subset M''$. Hence $M''=M'$.

\end{proof}

\begin{prop}
Let $\langle\cdot,\cdot\rangle'$ and $\langle\cdot,\cdot\rangle''$ be two such extensions of the inner product on $M$ to inner products on $M'$. Then they coincide.

\end{prop}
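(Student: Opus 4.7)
My plan is to exploit the self-duality of $M'$ furnished by the preceding lemma, together with the fact that any element of $M'$ is determined by its values on $M$.

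Fix $f_0\in M'$ and consider the map $F\colon M'\to A$ defined by
\[
F(g)=\langle g,f_0\rangle' -\langle g,f_0\rangle''.
\]
Each of the two inner products is $A$-antilinear in the first variable and, being an inner product on a Hilbert $C^*$-module, obeys the Cauchy--Schwarz inequality, so $F$ is a bounded $A$-antilinear functional on $M'$, i.e.\ an element of $(M')'=M''$. Applying the Hilbert-dual identity $f_0(m)=\langle \widehat m,f_0\rangle'$ to each extension gives
\[
F(\widehat m)=f_0(m)-f_0(m)=0
\]
for every $m\in M$, so $F$ vanishes on the image of $M$ in $M'$.

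By the preceding lemma, $M'$ is self-dual with respect to $\langle\cdot,\cdot\rangle'$, so there exists $h\in M'$ with $F(g)=\langle g,h\rangle'$ for all $g\in M'$. Taking $g=\widehat m$ and invoking the Hilbert-dual identity once more,
\[
h(m)=\langle \widehat m,h\rangle'=F(\widehat m)=0
\]
for all $m\in M$. But as an element of $M'$, $h$ is nothing other than the functional $m\mapsto h(m)$, hence $h=0$, and therefore $F\equiv 0$. This yields $\langle g,f_0\rangle'=\langle g,f_0\rangle''$ for every $g\in M'$; since $f_0\in M'$ was arbitrary, the two extensions coincide.

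The only conceptual hurdle is recognising that the self-duality of $M'$ is the crucial ingredient; once it is available from the preceding lemma, the rest of the argument is a short chain of applications of the defining identity $f(m)=\langle \widehat m,f\rangle'$, together with the trivial observation that an element of $M'$ which vanishes on $M$ is the zero functional.
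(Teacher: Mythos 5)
Your overall strategy -- represent the difference functional $F(g)=\langle g,f_0\rangle'-\langle g,f_0\rangle''$ via self-duality of $M'$ and then show the representing element vanishes on $M$ -- is genuinely different from the paper's argument (the paper averages the two inner products and shows that the adjoint of the identity map $(M',\langle\cdot,\cdot\rangle^+)\to(M',\langle\cdot,\cdot\rangle')$ is the identity), and the second half of your argument is fine. But there is a gap at the step ``so $F$ is a bounded $A$-antilinear functional on $M'$''. Cauchy--Schwarz controls the two terms by two \emph{different} norms: $\|\langle g,f_0\rangle'\|\le\|g\|'\|f_0\|'$ and $\|\langle g,f_0\rangle''\|\le\|g\|''\|f_0\|''$, where $\|\cdot\|'$ and $\|\cdot\|''$ are the norms induced by the two extensions. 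At this point nothing is known relating $\|\cdot\|''$ to $\|\cdot\|'$ (each dominates the standard dual norm on $M'$, but not conversely a priori), while the self-duality you invoke for $(M',\langle\cdot,\cdot\rangle')$ only represents functionals that are bounded for $\|\cdot\|'$. So the existence of $h$ with $F(g)=\langle g,h\rangle'$ is not justified as written; the same issue undermines the identification of $F$ with an element of $M''$. This incomparability of the two norms is precisely the difficulty the paper's averaging trick is designed to sidestep.

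The gap can be closed in two ways. Either justify that the norms are comparable: if each extension makes $M'$ a Hilbert $C^*$-module, then $\|\cdot\|'$ and $\|\cdot\|''$ are complete norms dominating the (complete) standard dual norm, and the open mapping theorem makes all three equivalent, after which your argument runs verbatim. Or, closer in spirit to the paper, run your representation argument over the average $\langle\cdot,\cdot\rangle^+=\tfrac12(\langle\cdot,\cdot\rangle'+\langle\cdot,\cdot\rangle'')$, which is again an extension in the sense of the definition: since $\langle f,f\rangle'\le 2\langle f,f\rangle^+$ and $\langle f,f\rangle''\le 2\langle f,f\rangle^+$, the functional $F$ is bounded on the self-dual module $(M',\langle\cdot,\cdot\rangle^+)$, hence $F(g)=\langle g,h\rangle^+$ for some $h\in M'$; then $h(m)=\langle\widehat m,h\rangle^+=F(\widehat m)=0$ for all $m\in M$, so $h=0$ and $F\equiv0$. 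With that repair your proof is correct and is arguably a tidy alternative to the adjoint computation in the paper.
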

\begin{proof}

Set $\langle f,g\rangle^+=\frac{1}{2}(\langle f,g\rangle'+\langle f,g\rangle'')$, $f,g\in M'$. Then $\langle\cdot,\cdot\rangle^+$ is also an extension of the inner product on $M$. Then $\langle f,f\rangle'\leq 2\langle f,f\rangle^+$ for any $f\in M'$, hence $\id:M'\to M'$ is a bounded map from $(M',\langle,\cdot,\cdot\rangle^+)$ to $(M',\langle,\cdot,\cdot\rangle')$. It is also obviously $A$-linear.
 
As $M'$ with any inner product is self-dual, there exists the adjoint operator $\id^*:(M',\langle,\cdot,\cdot\rangle')\to (M',\langle,\cdot,\cdot\rangle^+)$. By definition, $\langle \id f,g\rangle'=\langle f,\id^*g\rangle^+$ for any $f,g\in M'$. In particular, this holds for $f=\widehat m$, $m\in M$. Then
$$
\langle \id \widehat m,g\rangle^+=\langle \widehat m,g\rangle=\langle \widehat m,\id^* g\rangle=\langle \widehat m,\id^* g \rangle'
$$
for any $g\in M'$ and any $m\in M$, hence $\id^*g=\id g$, i.e. $\id^*=\id$, therefore $\langle f,g\rangle'=\langle \id f,\id g\rangle^+$ for any $f,g\in M'$. Thus, $\langle f,g\rangle'=\langle f,g\rangle''$ for any $f,g\in M'$.

\end{proof}

Any self-dual Hilbert $C^*$-module and any Hilbert $C^*$-module over a monotone complete $C^*$-algebra is obviously a module with a Hilbert dual \cite{F_1995}.

\section{Right ideals from functionals}

Let $f\in M'$. Define the subset $J_f\subset A$ by $J_f=\{a\in A:fa\in M\}$. 
\begin{lem}
$J_f$ is a closed right ideal in $A$.

\end{lem}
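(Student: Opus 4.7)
The plan is to verify the three required properties one at a time, using the right $A$-module structure on $M'$ and the fact that $M$ sits isometrically (hence closedly) inside $M'$.

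First, I would check that $J_f$ is a subspace. Closure under sums is immediate from the $A$-linearity of the action: if $a,b\in J_f$ then $f(a+b)=fa+fb$ is a sum of elements of $M$, hence lies in $M$. Similarly $f(\lambda a)=\lambda (fa)\in M$ for any scalar $\lambda$. The right ideal property is equally direct: for $a\in J_f$ and $b\in A$, associativity of the $A$-module action on $M'$ gives $f(ab)=(fa)b$, and since $fa\in M$ and $M$ is a right $A$-module, $(fa)b\in M$, so $ab\in J_f$.

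Next comes closedness, which is the only step that uses anything beyond the module axioms. Let $a_n\in J_f$ with $a_n\to a$ in $A$. Since $f\in M'$ is a bounded $A$-antilinear map, the assignment $A\to M'$, $b\mapsto fb$, is bounded (in fact $\|fb\|\leq\|f\|\cdot\|b\|$), so $fa_n\to fa$ in $M'$. Each $fa_n$ belongs to $M$; since the inclusion $M\subset M'$ is isometric and $M$ is complete in its own norm, $M$ is closed in $M'$. Therefore $fa\in M$ and $a\in J_f$.

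There is no real obstacle here; the only point worth flagging is the need to identify the correct topology on $M'$ in which $fa_n\to fa$, and to observe that closedness of $M$ in $M'$ follows from completeness plus the isometric inclusion $M\hookrightarrow M'$ mentioned at the start of Section~2. No appeal to the Hilbert-dual hypothesis is actually required for this lemma.
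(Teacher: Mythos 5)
Your proposal is correct and simply spells out the routine verification that the paper dismisses as ``Obvious'': algebraic closure under addition and right multiplication, plus closedness of $J_f$ from the boundedness of $b\mapsto fb$ and the fact that $M$ is complete and isometrically embedded in $M'$. Your remark that the Hilbert-dual hypothesis is not needed here is accurate as well.
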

\begin{proof} 
Obvious.
\end{proof}

Apart from the trivial case when $f\in M$ (then $M_f=A$), there are two extreme cases for the right ideal $J_f$ when $f\notin M$:
\begin{itemize}
\item
$J_f$ is zero;
\item
$J_f$ is essential, i.e. $aJ_f=0$, $a\in A$, implies $a=0$.
\end{itemize}

Let $M$ be a Hilbert $C^*$-module over a $C^*$-algebra $A$, $f\in M'$. Define $M^0_f$ as the $A$-linear span of $M$ and $f$, i.e. 
$$
M^0_f=\{m+fa:m\in M,a\in A\}\subset M', 
$$
and let $M_f$ be the norm closure of $M^0_f$. Usually, $M_f$ need not be a pre-Hilbert $C^*$-module, i.e. the inner product on $M$ need not be extendable to $M_f$, but sometimes this happens, e.g. when $A$ is a monotone complete $C^*$-algebra \cite{F_1995}. We are interested in the uniqueness of the inner product extension from $M$ to $M_f$, or to larger modules, e.g. to $M'$. 


\begin{lem}
Suppose that the inner product on $M$ extends to an inner product on $M_f$. If the right ideal $J_f$ is essential then the extended inner product on $M_f$ is unique.

\end{lem}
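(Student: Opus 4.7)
The plan is to compare two extensions $\langle\cdot,\cdot\rangle_1$ and $\langle\cdot,\cdot\rangle_2$ of the inner product from $M$ to $M_f$ and reduce the question to two scalar matching conditions in $A$. Since $M^0_f=\{m+fa:m\in M,\ a\in A\}$ is dense in $M_f$ and the inner product is continuous in each slot, it suffices to show the two extensions agree on $M^0_f$; by $A$-sesquilinearity this in turn reduces to checking $\langle m,f\rangle_1=\langle m,f\rangle_2$ for every $m\in M$ and $\langle f,f\rangle_1=\langle f,f\rangle_2$.

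For the first matching condition, fix $m\in M$ and set $e_m:=\langle m,f\rangle_1-\langle m,f\rangle_2\in A$. For any $c\in J_f$ the element $fc$ lies in $M$ (say $fc=\widehat n$), so $\langle m,fc\rangle_i=\langle m,n\rangle$ is the same for $i=1,2$; combined with right $A$-linearity of the inner product this gives $e_mc=0$, so $e_mJ_f=0$, and the essentiality of $J_f$ forces $e_m=0$.

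For the second, set $d:=\langle f,f\rangle_1-\langle f,f\rangle_2$. For arbitrary $c,c'\in J_f$ both $fc$ and $fc'$ lie in $M$, so $\langle fc,fc'\rangle_i$ is already determined by the inner product on $M$ and is independent of $i$; sesquilinearity then gives $\langle fc,fc'\rangle_i=c^*\langle f,f\rangle_i\,c'$, so $c^*dc'=0$ for every $c,c'\in J_f$. Taking adjoints rewrites this as $(dc')^*c=0$, hence $(dc')^*J_f=0$ and essentiality forces $(dc')^*=0$; letting $c'$ range over $J_f$ yields $dJ_f=0$, and one further application of essentiality gives $d=0$.

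The main obstacle is that we have no direct control on $\langle\cdot,\cdot\rangle_i$ for pairings involving $f$ itself; the key move is to exploit the fact that $fc$ and $fc'$ fall back into $M$ for $c,c'\in J_f$, which pins down the pairings $\langle m,fc\rangle_i$ and $\langle fc,fc'\rangle_i$ from the data already prescribed on $M$, after which essentiality of $J_f$ promotes the resulting annihilation identities in $A$ to outright vanishing.
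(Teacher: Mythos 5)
Your proof is correct and follows essentially the same route as the paper: multiply by elements of $J_f$ so that $f$ falls back into $M$, where both extensions are already determined, and then use essentiality of $J_f$ to cancel the resulting annihilation identities in $A$. The only (harmless, in fact slightly more careful) difference is that the paper pins down $\langle f,f\rangle_i$ in one step via the dual pairing $f(fj)=\langle fj,f\rangle_i$, whereas you avoid invoking that pairing and instead verify the cross terms $\langle m,f\rangle_i$ explicitly, followed by a two-stage essentiality argument for $\langle f,f\rangle_i$.
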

\begin{proof}
Let $\langle\cdot,\cdot\rangle_1$ and $\langle\cdot,\cdot\rangle_2$ be two extensions of the inner product $\langle\cdot,\cdot\rangle$ on $M$ making $M_f$ a pre-Hilbert $C^*$-module. Set $a_i=\langle f,f\rangle_i$, $i=1,2$. If $j\in J_f$ then $fj\in M$, hence $f(fj)=\langle fj,f\rangle_i$, $i=1,2$, hence $j^*a_1=j^*a_2$, or, equivalently, passing to the adjoints, $(a_2-a_1)j=0$. As this holds for any $j\in J_f$ and as $J_f$ is essential then $a_2-a_1=0$, hence the two inner products coincide.

\end{proof}


\begin{thm}\label{=}
Suppose that $M$ is a Hilbert $C^*$-module with a Hilbert dual, and let $\langle\cdot,\cdot\rangle_{M'}$ be the extention of the inner product $\langle\cdot,\cdot\rangle$ from $M$ to $M'$. Suppose also that $M$ is a thick submodule in a Hilbert $C^*$-module $N$ with the inner product $\langle\cdot,\cdot\rangle_N$, which determines an injection $i:N\to M'$ (the inner product on $M$ is supposed to be the restriction of the inner product on $N$). If the right ideal $J_f$ is essential for any $f\in M'$ then the inner product on $N$ coincides with the extended inner product on $M'$, i.e. $\langle n,n'\rangle_N=\langle i(n),i(n')\rangle_{M'}$ for any $n,n'\in N$.

\end{thm}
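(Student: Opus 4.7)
The plan is, for any pair $n,n'\in N$, to compare $\langle n,n'\rangle_N$ with $\langle i(n),i(n')\rangle_{M'}$ by multiplying on the right by elements of the ideal $J_{i(n')}$ and then invoking the essentiality hypothesis.

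Fix $n,n'\in N$ and set $f=i(n')\in M'$. The map $i$ is $A$-linear, since $i(n'a)(m)=\langle m,n'a\rangle_N=\langle m,n'\rangle_N\,a=(i(n')a)(m)$. Hence for every $a\in J_f$ we have $i(n'a)=i(n')\,a=fa\in M$; because $i$ is injective and agrees with the canonical inclusion on the submodule $M$, this already forces $n'a\in M$. This is the step I expect to be the main (indeed, the only) obstacle: one has to verify that essentiality of $J_f$ is strong enough to pull information from $M'$ back into $N$, and the bridge is precisely the fact that $n'a$ lives in $M$ rather than merely in $N$.

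Now evaluate the functional $i(n)$ at the element $n'a\in M$ in two ways. Directly from the definition of $i$ and antilinearity of $\langle\cdot,\cdot\rangle_N$ in the first argument,
$$
i(n)(n'a)=\langle n'a,n\rangle_N=a^*\langle n',n\rangle_N.
$$
On the other hand, applying the characteristic identity $g(m)=\langle\widehat m,g\rangle_{M'}$ for the Hilbert dual to $g=i(n)$ and $m=n'a$,
$$
i(n)(n'a)=\langle\widehat{n'a},i(n)\rangle_{M'}=\langle i(n')a,i(n)\rangle_{M'}=a^*\langle i(n'),i(n)\rangle_{M'}.
$$
Subtracting gives $a^*\bigl(\langle n',n\rangle_N-\langle i(n'),i(n)\rangle_{M'}\bigr)=0$ for every $a\in J_f$.

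Passing to adjoints and using $\langle x,y\rangle^*=\langle y,x\rangle$, this reads $\bigl(\langle n,n'\rangle_N-\langle i(n),i(n')\rangle_{M'}\bigr)\cdot a=0$ for every $a\in J_f$; equivalently, the element $c:=\langle n,n'\rangle_N-\langle i(n),i(n')\rangle_{M'}\in A$ satisfies $c\,J_f=0$. By the assumption that $J_f=J_{i(n')}$ is essential, $c=0$, which is exactly the desired equality $\langle n,n'\rangle_N=\langle i(n),i(n')\rangle_{M'}$.
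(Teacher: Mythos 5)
Your proof is correct and follows essentially the same route as the paper: use $a\in J_{i(n')}$ to push $n'a$ into $M$ (you via injectivity of $i$ and $A$-linearity, the paper via thickness of $M$ in $N$ --- equivalent arguments), evaluate the functional $i(n)$ there in two ways, and invoke essentiality. Your version is in fact slightly cleaner, since working with the bilinear expression for general $n,n'$ yields $c\,J_f=0$ directly in the form of the essentiality definition, avoiding the paper's final polarization step and its passage from $j^*(a-b)j=0$ to $(a-b)J_f=0$.
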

\begin{proof}
Recall that the injection $j$ is defined as follows: for $n\in N$, set $i(n)(m)=\langle m,n\rangle_N$, $m\in M$, i.e. $i(n)=\widehat n|_M$. Thickness of $M$ in $N$ implies that $i$ is injective.

Suppose that there exists $n\in N$ such that $\langle n,n\rangle_N\neq\langle i(n),i(n)\rangle_{M'}$. Set $f=i(n)\in M'$, and let $j\in J_f$. Then, by definition, $m=i(n)j\in M$, and
$$
\langle m',nj\rangle_N=\langle m',nj\rangle_N=i(nj)(m')=\langle m',i(n)j\rangle_{M'},\quad m'\in M
$$
hence $\langle m',nj-m\rangle_N=0$ for any $m'\in M$. As $M$ is thick in $N$, the latter implies that $nj=m\in M$. 

Set $a=\langle n,n\rangle_N$, $b=\langle i(n),i(n)\rangle_{M'}$. Then 
$$
j^*aj=\langle nj,nj\rangle_N=\langle m,m\rangle_M=\langle i(n)j,i(n)j\rangle_{M'}=j^*bj,
$$ 
hence $j^*(a-b)j=0$ for any $j\in J_f$. As $J_f$ is essential, this implies that $a=b$. Thus, $\langle n,n\rangle_N=\langle i(n),i(n)\rangle_{M'}$ for any $n\in N$. Polarization identity then implies that $\langle n',n\rangle_N=\langle i(n'),i(n)\rangle_{M'}$ for any $n,n'\in N$.

\end{proof}

\begin{thm}\label{neq}
Suppose that $m$ is a Hilbert $C^*$-module with a Hilbert dual, and let $\langle\cdot,\cdot\rangle'$ be the extension of the inner product $\langle\cdot,\cdot\rangle$ from $M$ to $M'$, hence to $M_f\subset M'$. 
Suppose also that $\langle f,f\rangle'\geq 1$.
If $J_f=0$ then there exists a Hilbert $C^*$-module $N$ such that $M\subset N$ is thick, $f\in N$, and the norm of $f$ as a functional (i.e. the standard norm on $M'$) is strictly smaller than the norm of $f$ as an element of $N$.

\end{thm}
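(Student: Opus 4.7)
The plan is to build $N$ by putting a new, strictly larger inner product on $M_f^0 = \{m + fa : m \in M, a \in A\}$, viewing it as $M \oplus A$ (this identification uses the hypothesis $J_f = 0$, which forces the decomposition $m + fa$ to be unique). Writing $c_0 := \langle f, f\rangle'$ and choosing a positive $c \in A$ with $c \geq c_0$, I would define
\[
\langle (m_1,a_1), (m_2,a_2)\rangle_N := \langle m_1, m_2\rangle + f(m_1) a_2 + a_1^* f(m_2)^* + a_1^* c\, a_2,
\]
which is $A$-sesquilinear and symmetric (using the $A$-antilinearity of $f$). Inside $N$, the original $M$ will sit as $M \times \{0\}$, and the functional $f$ will be realised as $f_N := (0, 1_A)$, with $\langle m, f_N\rangle_N = f(m)$ recovering the original action of $f$ on $M$.

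The key observation I would exploit is the algebraic identity
\[
\langle (m,a), (m,a)\rangle_N = \langle m + fa, m + fa\rangle' + a^*(c - c_0)\, a,
\]
which expresses the new self-inner-product as a sum of two positive elements of $A$. Choosing $c = 2 c_0$ yields $c - c_0 = c_0 \geq 1$, hence $a^* a \leq \langle (m,a), (m,a)\rangle_N$; this single bound drives both non-degeneracy and completeness. Non-degeneracy: if $\langle (m,a), (m,a)\rangle_N = 0$, then $a = 0$ and $m + fa = 0$ in $M'$, so $m = 0$. Completeness: for a Cauchy sequence $(m_k, a_k)$ in $N$, the bound gives $(a_k)$ Cauchy in $A$; the companion inequality $\|m + fa\|_{M'} \leq \|(m,a)\|_N$ (from the same identity) shows $(m_k + f a_k)$ Cauchy in $M'$, whence $m_k = (m_k + f a_k) - f a_k$ converges inside the closed submodule $M \subset M'$.

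Next I would address thickness and the norm comparison. If $(m,a) \in N$ is orthogonal to $M$, then $\langle m', m + fa\rangle' = 0$ for every $m' \in M$; since $M$ is thick in $M'$ (an element $g \in M'$ annihilating $M$ satisfies $g(m) = \langle \widehat m, g\rangle' = 0$ for all $m$, hence $g = 0$), this forces $m + fa = 0$ in $M'$, and $J_f = 0$ then forces $a = 0$ and $m = 0$. For the norms, $\langle f_N, f_N\rangle_N = c = 2 c_0$ gives $\|f_N\|_N^2 = 2\|c_0\|$, while Cauchy--Schwarz in $M'$ yields the functional-norm bound $\|f\|_{M'}^2 \leq \|c_0\|$. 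Since $\|c_0\| \geq 1 > 0$, we conclude $\|f_N\|_N^2 = 2 \|c_0\| > \|c_0\| \geq \|f\|_{M'}^2$, the desired strict inequality.

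The main obstacle is choosing the new inner product well and spotting the positivity identity; after that, the roles of the hypotheses become transparent. The assumption $\langle f, f\rangle' \geq 1$ is precisely what allows $c - c_0$ to be taken bounded below by $1$, which is what ensures completeness of $N$ and the strictness of the norm inequality; the assumption $J_f = 0$ is used twice, first to identify $M_f^0$ with $M \oplus A$, and then to push the orthogonality arguments through to non-degeneracy and thickness.
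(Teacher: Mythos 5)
Your construction is the same as the paper's: with $c=2c_0$ your form on $M\oplus A$ is literally the paper's $\langle m+fa,m'+fa'\rangle_+=\langle m+fa,m'+fa'\rangle'+a^*\langle f,f\rangle'a'$ on $M^0_f$, and you use $\langle f,f\rangle'\geq 1$ for completeness, thickness of $M$ in $M'$ for thickness in $N$, and the comparison $\sqrt{2}\|c_0\|^{1/2}$ versus $\|c_0\|^{1/2}$ exactly as the paper does. The proposal is correct and follows essentially the same route (your explicit positivity identity and the completeness bookkeeping are, if anything, slightly more carefully spelled out).
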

\begin{proof}
 The condition $J_f=0$ means that the subspaces $M$ and $fA=\{fa:a\in A\}$ form a direct sum (note that it need not be a topological direct sum) equal to $M^0_f$. Let $c=\langle f,f\rangle'\in A$, and, for $m+fa,m'+fa'\in M^0_f$, set 
$$
\langle m+fa,m'+fa'\rangle_+=\langle m+fa,m'+fa'\rangle'+a^*c a'.
$$ 

Let us check linearity with respect to the first argument: 
\begin{eqnarray*}
\langle m_1{+}m_2{+}f(a_1{+}a_2),m'{+}fa'\rangle_{+}&\!=\!&\langle m_1{+}m_2{+}f(a_1{+}a_2),m'{+}fa'\rangle'{+}(a_1{+}a_2)ca'\\
&\!=\!&
\langle m_1{+}fa_1,m'{+}fa'\rangle'{+}\langle m_2{+}fa_2,m'{+}fa'\rangle'{+}(a_1{+}a_2)ca'\\
&\!=\!&
\langle m_1{+}fa_1,m'{+}fa'\rangle_{+}{+}\langle m_2{+}fa_2,m'{+}fa'\rangle_{+}.
\end{eqnarray*}
Other properties of an inner product obviously hold, making thus $M^0_f$ a pre-Hilbert $C^*$-module with respect to $\langle\cdot,\cdot\rangle_+$.

Let us show that $M^0_f$ is complete with respect to the norm $\|\cdot\|_+$ determined by the inner product $\langle\cdot,\cdot\rangle_+$ (it may be not complete with respect to the norm on $M'$).

Suppose that $\{m_k+fa_k\}_{k\in\mathbb N}$ is a Cauchy sequence with respect to $\|\cdot\|_+$, where $m_k\in M$, $a_k\in A$, $k\in\mathbb N$. Note that $\|fa\|_+\leq\|m+fa\|_+$ for any $m\in M$, $a\in A$, therefore, $\{fa_k\}_{k\in\mathbb N}$ is a Cauchy sequence as well. Then $\{m_k\}_{k\in\mathbb N}$ is Cauchy too.

As $\langle f,f\rangle'\geq 1$, we have
$$
a^*a\leq a^*\langle f,f\rangle'a=\langle fa,fa\rangle'
$$
for any $a\in A$, hence
$$
\|a_k-a_n\|^2\leq \langle fa_k-fa_n,fa_k-fa_n\rangle',
$$
so $\{ak\}_{k\in\mathbb N}$ is a Cauchy sequence too. As both $A$ and $M$ are complete, the limit of the
sequence $\{m_k + fa_k\}_{k\in\mathbb N}$ is of the form $m + fa$ for some $m\in M$ and $a\in A$, thus $M^0_
f$ is complete.

Note that 
$$
\langle m,m'+fa\rangle_+=\langle m,m'+fa\rangle'=\langle m,m'\rangle+f(m)a,\quad m,m'\in M, a\in A,
$$
hence $\langle\cdot,\cdot\rangle_+$ is an extension of the inner product $\langle,\cdot,\cdot\rangle$ on $M$ to $N$.
As $M$ is thick in $M'$, it is thick in $N$ as well. Indeed, if $\langle m,m'+fa\rangle_+=0$ for any $m\in M$ then
$\langle m,m'+fa\rangle'=0$, hence $m'+fa\in M'$ is orthogonal to the whole $M$, hence $m'+fa=0$.

Finally, 
$$
\|f\|'=(\langle f,f\rangle')^{1/2}=\|c\|^{1/2}, 
$$
while 
$$
\|f\|_+=(\langle f,f\rangle_+)^{1/2}=(\langle f,f\rangle'+c)^{1/2}=\sqrt{2}\|c\|^{1/2}.
$$ 

\end{proof}

\begin{rmk}
Considering $\widetilde M=A\oplus M$ instead of $M$, and $\widetilde f = (\widehat 1_A, f)\in \widetilde M'$ instead of
$f\in M'$, we get $\langle \widetilde f, \widetilde f\rangle'\geq 1$, and $J_{\widetilde f}=J_f$, so the condition $\langle f,f\rangle'\geq 1$ is not restrictive.

\end{rmk}

\section{Positive results}

For a Hilbert $C^*$-module $N$ we write $\widehat n$ for the functional $x\mapsto\langle n,x\rangle$.

\begin{thm}
Let $M\subset N$ be Hilbert $C^*$-modules over a commutative $W^*$-algebra $A=C(X)$ such that $M$ is countably generated and $M^\perp=0$. Then $\|n\|=\|\widehat n|_M\|$ for any $n\in N$,i.e. the map $n\mapsto\widehat n|_M$, $N\to M'$, is an isometry. 

\end{thm}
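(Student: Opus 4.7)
My plan is to invoke Theorem \ref{=}: since $A$ is a commutative $W^*$-algebra and hence monotone complete, $M$ has a Hilbert dual, so Theorem \ref{=} reduces the desired equality $\|n\|=\|\widehat n|_M\|$ to the assertion that, for every $f\in M'$, the right ideal $J_f\subseteq A$ is essential. The entire task is thus to establish essentiality of $J_f$.

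I would first reduce to the case of the standard module $H_A=\ell^2(A)$. By Kasparov's stabilization theorem, countable generation of $M$ gives $M\oplus H_A\cong H_A$, so $M$ is an orthogonal direct summand of $H_A$; write $H_A=M\oplus M^\perp$. Extend $f\in M'$ to $\widetilde f\in H_A'$ by setting $\widetilde f|_{M^\perp}=0$. If $\widetilde f a\in H_A$, decomposing this element along $M\oplus M^\perp$ exhibits its $M$-component as an element of $M$ that represents $fa$, so $a\in J_f$. This gives $J_{\widetilde f}\subseteq J_f$, and it suffices to prove $J_{\widetilde f}$ is essential in $A$.

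Realize $A$ as $L^\infty(X,\mu)$ for a localizable measure space $(X,\mu)$. Then $\widetilde f$ corresponds to a sequence $(f_k)\in A^{\mathbb N}$ with $g:=\sum_k|f_k|^2$ bounded as a weak limit in $A$, so $\sum_k|f_k(x)|^2\leq g(x)<\infty$ for $\mu$-a.e.\ $x$. Set $h_N(x):=\sum_{k>N}|f_k(x)|^2$, a decreasing sequence of positive elements of $A$ with $h_N(x)\to 0$ almost everywhere. A short verification shows that a projection $q=\chi_Q\in A$ lies in $J_{\widetilde f}$ precisely when $\|q h_N\|_\infty\to 0$, i.e.\ when $h_N\to 0$ \emph{uniformly} on $Q$.

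Given a nonzero projection $p=\chi_F\in A$, I would restrict $F$ to a subset of positive finite measure (possible by localizability of $\mu$) and apply Egorov's theorem to $h_N\downarrow 0$ on that subset, extracting $F'\subseteq F$ of positive measure on which the convergence is uniform; then $q=\chi_{F'}\leq p$ is a nonzero projection belonging to $J_{\widetilde f}$, establishing essentiality. The main obstacle is precisely this passage from pointwise-a.e.\ convergence to uniform convergence of $h_N$ on a set of positive measure, for which Egorov's theorem is the decisive tool and where commutativity of $A$ is essential (general $W^*$-algebras lack such a mechanism, which is consistent with the author's remark that the general $W^*$ case is open); the Kasparov reduction, although technically routine, is indispensable because it converts the abstract functional $f$ into a concrete sequence $(f_k)$ on which measure-theoretic arguments can be carried out.
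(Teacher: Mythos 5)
Your proposal is correct, and its overall skeleton --- Kasparov stabilization to pass to $l_2(A)$, essentiality of $J_f$ for every $f\in M'$, and then Theorem \ref{=} --- is exactly the strategy the paper uses (the paper applies Theorem \ref{=} only tacitly, as you do explicitly). Where you genuinely diverge is in the proof of the central step, essentiality of $J_f$. The paper stays in the Gelfand picture $A=C(X)$ with $X$ hyperstonean: it identifies $J_f$ precisely as the set of functions vanishing on the meager set $Y_f$ where the least upper bound of the partial sums $\sum_{k\leq n}|f_k|^2$ differs from their pointwise limit, and then deduces essentiality from Dixmier's result that meager subsets of a hyperstonean space are nowhere dense. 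You instead realize $A=L^\infty(X,\mu)$, where the lattice supremum of a bounded increasing sequence is the a.e.\ pointwise limit, characterize membership of a projection $\chi_Q$ in $J_{\widetilde f}$ by essentially uniform convergence of the tails $h_N$ to $0$ on $Q$, and use Egorov's theorem to produce, under any nonzero projection, a nonzero subprojection lying in $J_{\widetilde f}$. Your route is more elementary and self-contained (no hyperstonean or Baire-category input, no appeal to \cite{Dixmier}), while the paper's argument buys a sharper structural description of $J_f$ as the annihilator ideal of a nowhere dense closed set; also, your reduction to $l_2(A)$ is carried out at the level of ideals (the inclusion $J_{\widetilde f}\subseteq J_f$ via extending $f$ by zero), whereas the paper instead replaces $N$ by $N\oplus K$ and transfers the norm statement --- both reductions are sound. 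Two small points you should spell out: that ``every nonzero projection dominates a nonzero projection of $J_{\widetilde f}$'' yields essentiality (if $a J_{\widetilde f}=0$ with $a\neq 0$, apply your construction to a spectral projection $\chi_{\{|a|>\varepsilon\}}$ to get a contradiction), and that the functional norm $\|\widehat n|_M\|$ coincides with the norm induced by $\langle\cdot,\cdot\rangle_{M'}$ (Paschke/Frank), a fact the paper also uses without comment when passing from Theorem \ref{=} to the isometry statement.
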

\begin{proof}
If $M$ is countably generated then it can be considered as a direct summand in the standard Hilbert $C^*$-module $l_2(A)$, i.e. $l_2(A)=M\oplus K$ for some Hilbert $C^*$-module $K$. The functional $\widehat n|_M$, $n\in N$, can be similarly considered as a functional on $l_2(A)$ (defined by zero on $K$). If $\|n\|\neq\|\widehat n|_M\|$ for some $n\in N$ then the two norms differ for $(n,0)\in N\oplus K$, so the proof reduces to the case when $M=l_2(A)$. 

Let $f\in M'$, and let $J_f=\{a\in A: fa\in M\}$ be an ideal in $A$. We claim that if $M=l_2(A)$ then $J_f$ is essential for any $f\in M'$, i.e. $aJ_f=0$, $a\in A$, implies that $a=0$. Let $f=(f_n)_{n\in\mathbb N}$, $f_n\in C(X)$. The condition $f\in M'$ means that $\sum_{n=1}^\infty |f_n|^2$ is bounded. Set $g_n=\sum_{k=1}^n|f_n|^2$. As $A$ is a $W^*$-algebra, the least upper bound $\bar g$ of the monotone sequence $(g_n)_{n\in\mathbb N}$ exists and coincides with the pointwise limit 
$\check g=\lim_{n\to\infty}g_n$
outside a meager subset $Y_f$ of $X$. We claim that $J_f$ consists of all functions $a$ vanishing on $Y_f$. Recall that $fa\in M$ iff the sequence $\{f_na\}_{n\in\mathbb N}$ is norm convergent.

Supppose that $a\in A$, $a|_{Y_f}=0$. Given $\varepsilon>0$, set $U=\{x\in X:|a(x)|<\frac{\varepsilon}{\|\bar g\|}\}$. 
Clearly, $U\subset X$ is open, and $Y_f\subset U$. Then $|g_n(x)a(x)-\bar{g}(x)a(x)|\leq\varepsilon$ for any $x\in U$ and for
any $n\in\mathbb N$. On the compact set $X\setminus U$, the sequence $\{g_na\}_{n\in\mathbb N}$ converges pointwise to the
continuous function $\bar g a$, hence the convergence is uniform --- both on $U$ and on $X\setminus U$. Thus,
$a\in J_f$.

Now, let $a\in J_f$. Suppose that there exists $x_0\in Y_f$ such that $a(x_0)\neq 0$. The sequence
$\{g_na\}_{n\in\mathbb N}$ is uniformly convergent on $X$, hence $\check ga$ is continuous and coincides with $\bar ga$, i.e.
$\bar g(x)a(x)=\check g(x)a(x)$ for any $x\in X$. But $\bar g(x_0)\neq \check g(x_0)$. This contradiction shows that
$a|_{Y_f}=0$.

Let us show that the ideal $J_f$ is essential. Let $aJ_f=0$ for some $a\in C(X)$. Let $V=\{x\in X:a(x)\neq 0\}$. Then $V$ is an open subset of $X$. As $ab=0$ for any $b\in J_f$, we should have $b(x)=0$ for any $x\in V$. Recall that if $C(X)$ is a $W^*$-algebra then $X$ is a hyperstonean space. By Corollary following Proposition 5 of \cite{Dixmier}, in hyperstonean spaces, any meager set is nowhere dense, i.e. the interior of its closure is empty. Recall that functions in $J_f$ equal zero on $Y_f$, hence, on its closure $\overline{Y}_f$. The condition $aJ_f=0$ means that for any $b\in J_f$ one has
$b|_V=0$. We claim that this implies $V\subset \overline{Y}_f$. Indeed, assume the contrary: let $x_0\in V\setminus\overline{Y}_f$ satisfy the condition: any continuous function that vanishes on $\overline{Y}_f$ vanishes at $x_0$. As $X$ is
compact and Hausdorff, it is normal, hence there exists a continuous function that vanishes
on $\overline{Y}_f$ and equals 1 at $x_0$. This contradiction proves $V\subset\overline{Y}_f$. But $Y_f$ is nowhere dense,
hence the open set $V$ must be empty. Hence $a=0$.

\end{proof}


Let $J$ be of $c_0$-$\sum_{i}\oplus \mathbb K(H_i)$-type, i.e. it has a faithful $*$-representation as a $C^*$-algebra of compact operators on some Hilbert space, and let $A\supset J$ be a $C^*$-algebra such that $J$ is an essential ideal in $A$. As examples, we may consider $A=\mathbb B(H)$, $J=\mathbb K(H)$ and $A=l_\infty$, $J=c_0$.

\begin{thm}
Let $M\subset N$ be Hilbert $C^*$-modules over $A$ such that $M^\perp=0$. Suppose that $M$ is a Hilbert $C^*$-module with a Hilbert dual. If $A$ is a $C^*$-algebra containing an essential ideal $J$ of $c_0$-$\sum_{i}\oplus \mathbb K(H_i)$-type then $\|n\|=\|\widehat n|_M\|$ for any $n\in N$, i.e. the map $n\mapsto\widehat n|_M$, $N\to M'$, is an isometry. 

\end{thm}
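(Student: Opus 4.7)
My plan is to invoke Theorem \ref{=}, which reduces the claim to showing that $J_f$ is essential in $A$ for every $f \in M'$. Since $J$ itself is essential in $A$, it suffices to prove $J \subset J_f$; because $J_f$ is a closed right ideal and $J$ is the closed linear span of $\bigcup_p pA$ as $p$ ranges over rank-one projections in the summands $\mathbb K(H_i)$ of $J$, the whole problem reduces to showing $fp \in M$ for every such rank-one $p$.

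Fix $p = |e\rangle\langle e|$ in $\mathbb K(H_i)$ with unit vector $e \in H_i$, let $H = \bigoplus_j H_j$, and use essentiality of $J$ to obtain a faithful nondegenerate embedding $\pi \colon A \hookrightarrow M(J) \subset \mathbb B(H)$. I form the Kasparov interior tensor product Hilbert space $M \otimes_\pi H$, whose inner product is $\langle m \otimes h, m' \otimes h'\rangle = \langle h, \pi(\langle m, m'\rangle)\,h'\rangle_H$, and define a bounded linear map $\widetilde f \colon M \otimes_\pi H \to H$ by $\widetilde f(m \otimes h) = \pi(f(m))^* h$; the antilinearity of $f$ makes this well defined, and a Cauchy--Schwarz estimate gives $\|\widetilde f\| \leq \|f\|_{M'}$. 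Put $\xi_0 := \widetilde f^* e \in M \otimes_\pi H$. The crux is to show $\xi_0 = m_0 \otimes e$ for some $m_0 \in M$; granted this, the identity $\langle m \otimes h, \xi_0\rangle = \langle h, \pi(f(m)) e\rangle = \langle h, \pi(\langle m, m_0\rangle) e\rangle = \langle m \otimes h, m_0 \otimes e\rangle$, valid for all $m \in M$ and $h \in H$, forces $(f(m) - \langle m, m_0\rangle)p = 0$, so $fp = \widehat{m_0 p} \in M$, as required.

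To verify $\xi_0 = m_0 \otimes e$, I argue in two steps. First, $\xi_0$ lies in the orthogonal summand $M \otimes_\pi H_i$: because $\pi(A) \subset \prod_j \mathbb B(H_j)$ preserves every $H_j$, the vector $\pi(f(m)) e$ belongs to $H_i$, whence $\langle m \otimes h_j, \xi_0\rangle = \langle h_j, \pi(f(m)) e\rangle = 0$ for $h_j \in H_j$ with $j \neq i$. Second, $\{m \otimes e : m \in M\}$ coincides with all of $M \otimes_\pi H_i$: it is closed, being the isometric image of the complete Hilbert space $Mp$ under $mp \mapsto m \otimes e$ (the isometry follows from $p \langle m, m\rangle p \in pAp = \mathbb C p$ together with $\|m \otimes e\|^2 = \|p \langle m, m\rangle p\| = \|mp\|^2$), and it is dense in $M \otimes_\pi H_i$ since any elementary tensor $m \otimes h_i$ with $h_i \in H_i$ equals $(ma) \otimes e$ for the rank-one element $a = |h_i\rangle\langle e| \in \mathbb K(H_i) \subset A$, because $\pi(a) e = h_i$.

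The main obstacle I anticipate is this last identification $\{m \otimes e : m \in M\} = M \otimes_\pi H_i$. Density rests on the transitive action of $\mathbb K(H_i) \subset A$ on $H_i$ starting from $e$, while closedness requires realizing the ``rank-one compression'' $Mp$ as a closed Hilbert subspace of $M \otimes_\pi H$ via the compression formula for $p \langle m, m\rangle p$; once both are in place, the remainder of the argument is formal.
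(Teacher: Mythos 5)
Your proof is correct, and its global skeleton is the same as the paper's: both reduce the theorem via Theorem \ref{=} to essentiality of $J_f$ for every $f\in M'$, and both get this by proving $J\subset J_f$, i.e. $fj\in M$ for all $j\in J$. The difference lies in how $fj\in M$ is established. The paper works inside the Hilbert $C^*$-module $M'$: it forms $MJ\subset M'J$, invokes Magajna's theorem that every closed submodule of a Hilbert module over a $c_0$-sum of compacts is orthogonally complemented, and uses thickness of $M$ in $M'$ plus essentiality of $J$ to conclude $M'J=MJ\subset M$ (it also proves $NJ=MJ$, which is not actually needed). You instead prove $fp\in M$ for every rank-one projection $p=|e\rangle\langle e|$ in a summand $\mathbb K(H_i)$ by localizing $M$ at the faithful block-diagonal representation $A\hookrightarrow M(J)\subset\mathbb B(H)$, identifying $\{m\otimes e: m\in M\}$ with the closed subspace $M\otimes_\pi H_i$ (closedness via the isometry with $Mp$, density via transitivity of $\mathbb K(H_i)$), and showing the Riesz vector $\widetilde f^*e$ lies there; the passage from rank-one $p$'s to all of $J$ uses only that $J_f$ is a closed right ideal. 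In effect you reprove by hand exactly the fragment of Magajna-type complementation that is needed, which makes the argument self-contained and, at this step, slightly more economical in hypotheses: it uses only boundedness of $f$ as a module functional, not the Hilbert structure on $M'$ (that structure is still needed, as in the paper, to apply Theorem \ref{=}). One step to justify more carefully: the bound $\|\widetilde f\|\leq\|f\|_{M'}$ is not plain Cauchy--Schwarz but the standard fact that bounded module maps localize contractively; it follows from the inequality $\langle Tm,Tm\rangle\leq\|T\|^2\langle m,m\rangle$, valid for all bounded module maps, promoted to matrices over $A$ via the criterion that $b\in M_n(A)$ is positive iff $a^*ba\geq 0$ for all columns $a\in A^n$. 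This is standard and true, but deserves a sentence or a reference; it is not a gap in the argument.
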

\begin{proof}
Denote by $MJ\subset M$ the norm closure of the linear span of $\{mj:m\in M,j\in J\}$. Then $MJ$ is a Hilbert $C^*$-module over $J$. Clearly, $MJ\subset NJ$. It was shown in \cite{Magajna} that any submodule over $J$ is orthogonally complemented.

We claim that $MJ=NJ$. Indeed, if this is not true then  the orthogonal complement for $MJ$ in $NJ$ is not zero. Let $x\in NJ$ satisfy $x\perp MJ$. Then $\langle mj,x\rangle=0$ for any $m\in M$ and any $j\in J$. As $J$ is essential, we can conclude that $\langle m,x\rangle=0$ for any $m\in M$. As $M^\perp=0$ in $N$, we conclude that $x=0$.
It follows that $nj\in M$ for any $n\in N$ and any $j\in J$.  

Similarly, consider the inclusion $M\subset M'$. Here we have $M^\perp=0$ as well. Passing to $MJ\subset M'J$, we can conclude that $fj\in M$ for any $f\in M'$. In particular, $\widehat n|_M\cdot j\in M$.

By assumption, $M'$ is a Hilbert $C^*$-module with the inner product $\langle\cdot,\cdot\rangle_{M'}$. By Theorem \ref{=}, 
$\langle \widehat n|_M,\widehat n|_M\rangle_{M'}=\langle n,n\rangle_N$ for any $n\in N$.

\end{proof}

\section{Negative results}

Here we show that the norm of extensions of functionals from thick submodules may be not unique even for commutative monotone complete $C^*$-algebras. Take $X=[0,1]$, and consider the $C^*$-algebra $Bor(X)$ of all bounded Borel functions with the sup-norm. Consider the set $J$ of all functions $a\in Bor(X)$ such that the set $\{x\in X:a(x)\neq 0\}$ is meager. The set $J$ is an ideal in $Bor(X)$, so let $A=B(X)=Bor(X)/J$ be the quotient $C^*$-algebra. We
denote the equivalence class of $a$ by $[a]\in A$.
The $C^*$-algebra $B(X)$ was showed to be monotone complete in \cite{Dixmier}, while $Bor(X)$ is only monotone $\sigma$-complete \cite{S-W}.

\begin{lem}\label{Borel}
There exists $f\in l_2(A)'$ with $\langle f,f\rangle'\geq 1$ such that $fa\in l_2(A)$ for $a\in A$ implies that $a=0$. 

\end{lem}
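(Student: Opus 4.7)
The plan is to let $f=(f_n)_{n\in\mathbb N}$ with $f_n=[\chi_{V_n}]\in A$, where $\{V_n\}$ is a carefully chosen family of pairwise disjoint open subintervals of $X=[0,1]$. The guiding idea is that the $A$-norm of $[g]\in A$ is the meager-essential supremum of $g$, so by the Baire property any non-zero element of $A$ is ``seen'' on every non-empty open subset of $[0,1]$.

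\textit{Construction of the $V_n$.} Fix a countable basis $\{U_k\}_{k\in\mathbb N}$ of $[0,1]$ consisting of open intervals with rational endpoints, and an enumeration $(k_n,i_n)_{n\in\mathbb N}$ of $\mathbb N\times\mathbb N$ in which every $k$ occurs infinitely often as a first coordinate. I would inductively pick an open interval $V_n\subset U_{k_n}$ of length less than $2^{-n}$ that is disjoint from all previously chosen $V_{n'}$; this is always possible because the previous intervals are finitely many and occupy total length less than $1$, while $U_{k_n}$ has positive length, leaving open ``free space'' in which to insert $V_n$. The resulting family $\{V_n\}$ is pairwise disjoint, and for every $k$ infinitely many $V_n$ lie in $U_k$, so $\bigcup_{n\geq N}V_n$ is open and dense, hence residual, in $[0,1]$ for every $N$.

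\textit{Verification.} Since the $V_n$ are pairwise disjoint, $\sum_{n=1}^N|f_n|^2=[\chi_{V_1\cup\cdots\cup V_N}]\leq 1$ in $A$, so $f\in l_2(A)'$; and because $\bigcup_n V_n$ is residual, $\langle f,f\rangle'=[\chi_{\bigcup V_n}]=1\geq 1$. To prove $J_f=\{0\}$, suppose $a\in A$ with $fa\in l_2(A)$. The partial-sum Cauchy condition in $A$-norm reads $\||a|^2[\chi_{V_{N+1}\cup\cdots\cup V_M}]\|_A\to 0$, and by disjointness this norm equals $\max_{N<i\leq M}\mathrm{ess\,sup}_{V_i}|a|^2$; taking the supremum over $M$, the condition becomes $\mathrm{ess\,sup}_{V_i}|a|^2\to 0$ as $i\to\infty$. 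Suppose for contradiction that $a\neq 0$, and pick $\varepsilon>0$ so that $E=\{|a|\geq\varepsilon\}$ is non-meager. By the Baire property $E$ is comeager in some non-empty open set, which contains some basic $U_k$; hence $E$ is comeager in every open subinterval of $U_k$, so $\mathrm{ess\,sup}_{V_i}|a|^2\geq\varepsilon^2$ for each of the infinitely many $V_i\subset U_k$, a contradiction.

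\textit{Main difficulty.} The essential point is the combinatorial balance: pairwise disjointness of the $V_n$ is what keeps $\sum|f_n|^2\leq 1$, while revisiting every basic open set infinitely often is what forces the tail $\bigcup_{n\geq N}V_n$ to detect every non-trivial class. Once this arrangement is in place, the remainder is a direct application of Baire category together with the meager-essential-sup description of the norm on $A=B([0,1])$.
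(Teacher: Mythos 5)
Your verification steps (the description of the quotient norm as a meager-essential supremum, and the Baire-category argument) are fine in themselves, but the construction they rest on is impossible, and this is a fatal gap. A family of pairwise disjoint non-empty open intervals $\{V_n\}$ can never have the property that every basic interval $U_k$ contains (or even meets) infinitely many $V_n$: once $V_1$ is chosen, pick a basic interval $U_{k^*}$ with $\overline{U_{k^*}}\subset V_1$; every $V_n$ with $n\neq 1$ is disjoint from $V_1$, hence disjoint from $U_{k^*}$, so $U_{k^*}$ meets only $V_1$ and contains no member of the family at all. Concretely, your induction breaks down at the first stage $n$ with $k_n=k^*$: there is no interval inside $U_{k^*}\subset V_1$ disjoint from $V_1$. (The ``total length less than $1$'' argument does not help, since a single earlier interval can swallow $U_{k_n}$ entirely.) Moreover, the problem is not just with this particular induction: disjoint supports are intrinsically incompatible with the conclusion you want. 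If the $V_n$ are pairwise disjoint, take $a=[\chi_{V_1}]\neq 0$; then $f_na=[\chi_{V_n\cap V_1}]=0$ for $n\neq 1$ and $f_1a=[\chi_{V_1}]$, so $fa=([\chi_{V_1}],0,0,\dots)\in l_2(A)$, i.e.\ $J_f\neq 0$ no matter how cleverly the disjoint intervals are placed. To force $J_f=0$ you need the series $\sum_n|f_n|^2$ to fail to converge in norm after restriction to every non-meager piece, and with disjoint supports the restriction to one $V_n$ kills all but one term.

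This is exactly the difficulty the paper's construction is designed to overcome: it gives up disjointness across scales and instead, for each rational $r_m$, uses a whole sequence of consecutive intervals $[\alpha_m^k,\alpha_m^{k+1})$ accumulating at $r_m$, all weighted by the same constant $2^{-m}$. Boundedness of the partial sums then comes from the geometric weights (within a fixed $m$ the intervals are disjoint, so the $m$-th scale contributes at most $4^{-m}$), not from global disjointness, while every closed subinterval $V$ contains some $r_{m_0}$ in its interior and hence infinitely many terms of fixed size $4^{-m_0}$, so the series diverges in $B(V)$ for every $V$. Combining this with a representative argument (the paper uses a lower semicontinuous representative; your Baire-property argument would serve the same purpose) yields $J_f=0$. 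If you want to salvage your write-up, replace the disjoint family by such a two-parameter family with decaying weights; the rest of your verification can then be adapted.
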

\begin{proof}
We use the idea from \cite{KS} to construct $f=([f_1],[f_2],\ldots)\in l_2(A)'$ in such a way that the least upper bound for the partial sums of $\sum_{k=1}^\infty |f_k|^2$ would differ from their pointwise limit on the set of rationals in $(0,1)$. Here $f_k\in Bor(X)$, $k\in\mathbb N$.

Let $(r_m)_{m\in\mathbb N}$ be the sequence of all rational numbers in $(0,1)$ enumerated by natural numbers. For each $r_m$, take a strictly increasing sequence $\alpha_m^k\in(0,1)$ such that $\lim_{k\to\infty}\alpha_m^k=r_m$, and let $h_m^k$ be the characteristic function of the interval $[\alpha_m^k,\alpha_m^{k+1})$. Fix a bijection 
$$
\varphi:\mathbb N\to\mathbb N^2,\quad \varphi(n)=(m(n),k(n)), 
$$
and set $f_n=\frac{1}{2^{m(n)}}h_{m(n)}^{k(n)}$. Then the partial sums $\sum_{n=1}^N |f_n|^2$ (and hence $\sum_{n=1}^N [|f_n|]^2$) are uniformly bounded, hence $f=([f_n])_{n\in\mathbb N}\in l_2(A)'$. Note that we could take continuous functions instead of $h_n^k$ to get a required $f$. To fit the condition $\langle f,f\rangle'\geq 1$ we may replace the constructed $f=([f_1],[f_2],\ldots)$ by $(1_A,[f_1],[f_2],\ldots)$. For shortness' sake set $g_m=\sum_{k=1}^m|f_k|^2\in Bor(X)$. 

The series $\sum_{k=1}^n[|f_k|]^2$ is not norm convergent in $B(X)$ (i.e. $f\notin l_2(A)$). Indeed, note first that $\|\chi\|_{Bor(X)}=\|[\chi]\|_{B(X)}=1$ for a characteristic function $\chi$ of any open subset of $X$. The norm in the quotient algebra is given by 
$$
\|[\chi]\|_{B(X)}=\inf_{\psi\in J}\|\chi+\psi\|_{Bor(X)}=1
$$ 
as $\psi$ vanishes outside of a meager set. Take any $N\in\mathbb N$. There exists $n_0\in\mathbb N$ such that $m(N+n_0)=1$. Then, for any $M>N+n_0$, we have $g_M-g_N\geq\frac{1}{2}h_1^{k(N+n_0)}$, hence 
$$
\|[g_M]-[g_N]\|\geq\frac{1}{2}\|[h_1^{k(N+n_0)}]\|=\frac{1}{2}.
$$

Moreover, this series is not norm convergent on any closed interval $[c,d]=V\subset X$. Let $\check q:Bor(X)\to Bor(V)$ denote the restriction map. It induces the map of the quotients $q:B(X)\to B(V)$. We claim that the series $\sum_{k=1}^\infty q([|f_k|])^2$ is not norm convergent. Indeed, take $m_0$ such that $r_{m_0}\in V$, then we can repeat the previous argument.  
For any $N\in\mathbb N$ we can find $n_0\in\mathbb N$ such that $m(N+n_0)=m_0$. Then, for any $M>N+n_0$, we have $g_M-g_N\geq\frac{1}{2^{m_0}}h_{m_0}^{k(N+n_0)}$, hence 
$$
\|[g_M]-[g_N]\|\geq\frac{1}{2^{m_0}}\|[h_{m_0}^{k(N+n_0)}]\|=\frac{1}{2^{m_0}}.
$$

Let $a\in B(X)$ such that $a\neq 0$ and $fa\in l_2(A)$. Set $b=aa^*$, then $b\neq 0$, $b\geq 0$ and $fb\in l_2(A)$, so we may replace $a$ by $b$. If we show that $b=0$ then $a=0$ as well.  

Recall that every equivalence class in $B(X)$ has a lower semicontinuous representative \cite{Dixmier},
so there exists a lower semicontinuous $\beta\in Bor(X)$ such that $[\beta]=b$. Let 
$$
f(t)=\left\lbrace\begin{array}{cl}t&\mbox{if\ }t\geq 0;\\0&\mbox{if\ }t<0.\end{array}\right.
$$ 
Then $[f\circ\beta]=f([\beta])=f(b)=b$. As $f$ is non-decreasing, it is easy to see that $\alpha=f\circ\beta\in Bor(X)$ is also lower semicontinuous. We also have $\alpha(x)\geq 0$ for any $x\in X$.
As $b\neq 0$, there exists $\varepsilon>0$ and a non-empty open set $U\subset X$
such that $\alpha(x)>\varepsilon$ for any $x\in U$. Let $V\subset U$ be a non-empty closed subset. If the sequence $\{[g_n]b\}_{n\in\mathbb N}$ is norm convergent in $B(X)$ (i.e. if $fb\in l_2(A)$) then the sequence $\{q([g_n])\}_{n\in\mathbb N}$ is norm convergent in $B(V)$ too  --- a contradiction.

\end{proof} 

\begin{cor}
There exists a monotone complete commutative $C^*$-algebra $A$, a Hilbert $C^*$-module $N$ over $A$, a thick submodule $M\subset N$, and $n\in N$ such that $\|n\|_N>\|\widehat n|_M\|_{M'}$. 

\end{cor}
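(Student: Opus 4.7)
My plan is to combine Lemma~\ref{Borel} directly with Theorem~\ref{neq}. I would first fix $X=[0,1]$ and take $A=B(X)=Bor(X)/J$, the commutative monotone complete $C^*$-algebra constructed just above (monotone completeness is the result of \cite{Dixmier} already cited in the paper). I would then set $M=l_2(A)$; since $A$ is monotone complete, $M$ automatically has a Hilbert dual by \cite{F_1995}, so the standing hypothesis of Theorem~\ref{neq} is satisfied.

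Next I would invoke Lemma~\ref{Borel} to produce an element $f\in M'=l_2(A)'$ with $\langle f,f\rangle'\geq 1$ and with the property that $fa\in M$ forces $a=0$, i.e.\ $J_f=0$. Plugging this $f$ into Theorem~\ref{neq}, I obtain a Hilbert $C^*$-module $N$ in which $M$ sits as a thick submodule, such that $f\in N$ and the norm of $f$ as an element of $N$ strictly exceeds its norm as a functional on $M'$.

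Finally, I would set $n:=f$ viewed inside $N$ and check that $\widehat n|_M$ equals $f$ as an element of $M'$. This is immediate from the identity $\langle m,m'+fa\rangle_+=\langle m,m'\rangle+f(m)a$ recorded in the proof of Theorem~\ref{neq}: taking $m'=0$ and $a=1$ gives $\langle m,n\rangle_N=f(m)$ for every $m\in M$, so $\widehat n|_M$ coincides with $f$ as an $A$-valued functional. Therefore
\[
\|n\|_N=\|f\|_+>\|f\|'=\|\widehat n|_M\|_{M'},
\]
which is the required strict inequality.

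There is no genuinely new difficulty in this corollary: all the real work sits in Lemma~\ref{Borel}, where the quotient $B(X)=Bor(X)/J$ was used to manufacture an $f$ with trivial ideal $J_f$, and in Theorem~\ref{neq}, which packages any such $f$ into an enlarged Hilbert $C^*$-module $N$ with a strictly larger norm. The only point that still needs to be verified at the level of the corollary is the identification $\widehat n|_M=f$, and that is a one-line computation from the definition of the extended inner product $\langle\cdot,\cdot\rangle_+$.
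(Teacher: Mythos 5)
Your proposal is correct and follows exactly the paper's route: take $A=B(X)$, $M=l_2(A)$, produce $f$ with $J_f=0$ and $\langle f,f\rangle'\geq 1$ via Lemma~\ref{Borel}, and feed it into Theorem~\ref{neq}. Your extra one-line check that $\widehat n|_M$ coincides with $f$ (via $\langle m,f\rangle_+=f(m)$) is a point the paper leaves implicit, and it is verified correctly.
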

\begin{proof}
Let $A$ be as above, $M=l_2(A)$. Take $f\in l_2(A)'$ as in Lemma \ref{Borel}, i.e. such that $J_f=0$.
Then apply Theorem \ref{neq}.

\end{proof}

There is an interesting consequence of this result. We show that there exist functionals in $N'$ that equal zero on $M$.

Let $M\subset N$, and suppose that both $M$ and $N$ are Hilbert $C^*$-modules with a Hilbert dual. 
Denote the inclusion $M\to N$ by $\alpha$, and define the map $P:N'\to M'$ by $P(g)(m)=g(\alpha(m))$, $g\in N'$, $m\in M$. As the dual modules are Hilbert $C^*$-modules, this can be written as 
\begin{equation}\label{1}
\langle \widehat m,P(g)\rangle_{M'}=\langle \widehat{\alpha(m)},g\rangle_{N'}.
\end{equation}

As the dual modules are Hilbert $C^*$-modules, they are self-dual, i.e. equal to their second duals. Therefore, all bounded operators are adjointable. Let $I=P^*:M'\to N'$. Then $\langle f,P(g)\rangle_{M'}=\langle I(f),g\rangle_{N'}$, $f\in M'$. Taking $f=\widehat m$, $m\in M$, we get 
\begin{equation}\label{2}
\langle \widehat m,P(g)\rangle_{M'}=\langle I(\widehat m),g\rangle_{N'}.
\end{equation}  

Comparing (\ref{1}) and (\ref{2}), we see that $I(\widehat m)=\widehat{\alpha(m)}$ for any $m\in M$.

\begin{thm}\label{Thm}
Let $M\subset N$ be a thick submodule, let both $M$ and $N$ are Hilbert $C^*$-modules with a Hilbert dual. Then the map $I:M'\to N'$ is an isometric inclusion. If there exists $n\in N$ such that $\|n\|_N>\|\widehat n|_M\|_{M'}$ then $N'=I(M')\oplus K$ for some non-zero $K$. 

\end{thm}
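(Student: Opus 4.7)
The plan is to study the pair consisting of the restriction map $P\colon N'\to M'$ and its adjoint $I=P^{*}\colon M'\to N'$, and to prove the single identity $P\circ I=\id_{M'}$. Once this is in hand, both assertions of the theorem become essentially formal consequences of standard Hilbert-module manipulations on self-dual modules. The hard part, I expect, is exactly this identity: it is tempting to try to deduce it from $I(\widehat m)=\widehat{\alpha(m)}$ by density, but $\widehat M$ is in general not norm-dense in $M'$, so the argument must instead proceed directly from the adjoint relation together with the defining axiom $g(n)=\langle \widehat n,g\rangle_{N'}$ of the Hilbert dual.

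To verify $P\circ I=\id_{M'}$ I would fix $f\in M'$, $m\in M$, and rewrite $P(I(f))(m)=I(f)(\alpha(m))$, using the Hilbert-dual axiom on $N'$, as $\langle\widehat{\alpha(m)},I(f)\rangle_{N'}$; conjugate symmetry together with the adjoint relation $\langle I(\varphi),g\rangle_{N'}=\langle \varphi,P(g)\rangle_{M'}$ converts this into the adjoint of $\langle f,P(\widehat{\alpha(m)})\rangle_{M'}$. Because the inner product of $N$ restricts to that of $M$, a short check shows $P(\widehat{\alpha(m)})=\widehat m$, and a final appeal to the Hilbert-dual axiom on $M'$ yields $f(m)$. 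With $P\circ I=\id_{M'}$ established, $I$ is automatically injective, and the computation $\langle I(f),I(f)\rangle_{N'}=\langle f,P(I(f))\rangle_{M'}=\langle f,f\rangle_{M'}$ shows that $I$ preserves the $A$-valued inner product, i.e.\ it is an isometric inclusion.

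For the decomposition, I would put $E=I\circ P\colon N'\to N'$. The relation $P\circ I=\id_{M'}$ gives $E^{2}=E$, and $E$ is self-adjoint since $(I\circ P)^{*}=P^{*}\circ I^{*}=I\circ P$. Because $N'$ is self-dual, every bounded $A$-linear operator on it is adjointable and a self-adjoint projection orthogonally splits off its kernel, so $N'=\Im(E)\oplus\Ker(E)=I(M')\oplus\Ker(P)$. The natural candidate for the complement is therefore $K=\Ker(P)$, and it only remains to show that $K\neq 0$ under the norm hypothesis.

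For the last step I would argue by contradiction. If $\Ker P=0$, then $P$ is injective and, combined with $P\circ I=\id_{M'}$, is forced to be a two-sided inverse of $I$; in particular $I$ is an isometric isomorphism $M'\to N'$. Then for any $n\in N$ the element $\widehat n\in N'$ has the form $I(f)$, and applying $P$ gives $f=\widehat n|_{M}$. Isometry of $I$ now yields $\|n\|_{N}=\|\widehat n\|_{N'}=\|I(\widehat n|_{M})\|_{N'}=\|\widehat n|_{M}\|_{M'}$ for every $n\in N$, contradicting the strict inequality supplied in the hypothesis. Hence $K\neq 0$, and the proof is complete.
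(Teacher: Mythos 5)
Your proposal is correct and follows essentially the same route as the paper: the adjoint pair $P$ and $I=P^{*}$, the key identity $P\circ I=\id_{M'}$, the self-adjoint projection $I\circ P$ on the self-dual module $N'$, the identification $P(\widehat n)=\widehat n|_M$, and the norm comparison forcing the complement $K=\Ker P=(\id_{N'}-IP)(N')$ to be non-zero. The only (harmless) variation is that you verify $P\circ I=\id_{M'}$ by a direct pointwise computation from the duality axiom, whereas the paper first checks it on $\widehat M$ via $I(\widehat m)=\widehat{\alpha(m)}$ and then extends using self-adjointness of $PI$ and thickness of $M$ in $M'$, and you obtain $K\neq 0$ by contradiction rather than directly.
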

\begin{proof}

Let us calculate the composition $PI:M'\to M'$.
For $m,m'\in M$, we have 
\begin{eqnarray*}
\langle PI(\widehat m),\widehat m'\rangle_{M'}&=&\langle I(\widehat m),I(\widehat m')\rangle_{N'}\\
&=&\langle \widehat{\alpha(m)},\widehat{\alpha(m')}\rangle_{N'}\\
&=&\langle\alpha(m),\alpha(m')\rangle_N\\
&=&\langle m,m'\rangle_M\\
&=&\langle \widehat m,\widehat m'\rangle_{M'},
\end{eqnarray*}
hence, $\langle PI(\widehat m)-\widehat m,\widehat m'\rangle_{M'}=0$ for any $m'\in M$, thus, as $M\subset M'$ is thick, we conclude that $PI(\widehat m)=\widehat m$ for any $m\in M$. 

Note that $PI$ is selfadjoint, and $(PI)|_M=\id_M$. This suffices to show that $PI=\id_{M'}$: 
$$
\langle PI(f),\widehat m\rangle_{M'}=\langle f,PI(\widehat m)\rangle_{M'}=\langle f,\widehat m\rangle_{M'},\quad f\in M',
$$
hence, by thickness of $M$ in $M'$, we conclude that $PI(f)=f$ for any $f\in M'$. 

Then $IP:N'\to N'$ is a projection with range $M'$, and both $P$ and $I$ are contractions. It follows from (\ref{1}) that 
$$
\langle \widehat m,P(\widehat n)\rangle_{M'}=\langle\widehat{\alpha(m)},\widehat n\rangle_{N'}=\langle \alpha(m),n\rangle_N=\widehat n|_M(m)
$$
for any $m\in M$, $n\in N$, hence $P(\widehat n)=\widehat n|_M$ for any $n\in N$.

Let $n\in N$ satisfy $\|n\|_N>\|\widehat n|_M\|_{M'}$. Then $\|P(\widehat n)\|_{M'}<\|n\|_N$, hence $\|IP(\widehat n)\|<\|n\|_N$. If $IP=\id_{N'}$ then this cannot be true, hence the projection $IP$ is not the identity map.
Set $K=(\id_{N'}-IP)(N')$.

\end{proof}

\begin{cor}
Let $M\subset N$ be a thick submodule, and let both $M$ and $N$ be Hilbert $C^*$-modules with a Hilbert dual. If there exists $n\in N$ such that $\|n\|_N > \|\widehat n|_M\|_{M'}$ then there
exists $g\in N'$ such that $g|_M=0$.

\end{cor}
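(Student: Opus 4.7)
The strategy is essentially to read off the conclusion from Theorem \ref{Thm}: the non-zero summand $K$ in the decomposition $N'=I(M')\oplus K$ should consist precisely of functionals on $N$ whose restriction to $M$ is zero.

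First I would unpack what ``$g|_M=0$'' means in the language used in Theorem \ref{Thm}. Since $\alpha:M\hookrightarrow N$ is the inclusion and $P:N'\to M'$ is defined by $P(g)(m)=g(\alpha(m))$, the condition $g|_M=0$ is literally the same as $P(g)=0$. So it suffices to produce a non-zero element of $\ker P$.

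Next I would invoke Theorem \ref{Thm}, whose hypotheses are exactly those of the corollary: it gives a decomposition $N'=I(M')\oplus K$ with $K\neq 0$, where $K=(\id_{N'}-IP)(N')$. Pick any non-zero $g\in K$, say $g=(\id_{N'}-IP)(h)$ for some $h\in N'$. Using the identity $PI=\id_{M'}$ established in the proof of Theorem \ref{Thm}, one computes
$$
P(g)=P(h)-PIP(h)=P(h)-P(h)=0,
$$
so $g\in N'$ satisfies $g|_M=0$ and $g\neq 0$, as required.

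The only thing to check carefully is that the $g$ just produced is really non-zero in $N'$; but this is immediate, since $K$ is defined as the range of the projection $\id_{N'}-IP$ and Theorem \ref{Thm} asserts $K\neq 0$. There is no real obstacle here — the corollary is essentially a translation of the conclusion of Theorem \ref{Thm} from the language of the ``complement'' $K$ into the language of vanishing restrictions, via the observation $P(g)=g\circ\alpha=g|_M$.
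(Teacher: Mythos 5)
Your proof is correct and follows essentially the same route as the paper: both take a non-zero element $g$ of the complement $K$ from Theorem \ref{Thm} and observe that its restriction to $M$ vanishes. The only cosmetic difference is that you verify $P(g)=0$ via the identity $PI=\id_{M'}$, while the paper phrases the same fact as orthogonality of $g$ to $I(M')$; both are immediate from what is established in the proof of Theorem \ref{Thm}.
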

\begin{proof} 
Let $K$ be as in Theorem \ref{Thm}, and let $g\in K$, $g\neq 0$. Then $\langle g,f\rangle'=0$ 
for any $f\in M'$, hence $g|_M=0$.

\end{proof}



\begin{thebibliography}{1}

\bibitem{Dixmier}
{\sc J. Dixmier}.
On certains espaces consid\'er\'es par M. H. Stone.
{\em Sum. Bras. Math. II}, {\bf 11} (1951), 151--182.

\bibitem{F_1995}
{\sc M.~Frank}.
\newblock Hilbert $C^*$-modules over monotone complete $C^*$-algebras.
\newblock{\em Math.Nachr.} {\bf 175}(1995), 61--83.


\bibitem{KS} 
{\sc J.~Kaad, M.~Skeide}.
\newblock Kernels of Hilbert $C^*$-module maps: a counterexample. {\em J. Operator Theory,} to appear;
www.arxiv.org: 2101.03030.


\bibitem{Lance}
{\sc E. C. Lance}. Hilbert $C^*$-Modules --- a Toolkit for Operator Algebraists. London Math. Soc. Lecture Note Series {\bf 210}, Cambridge, 1995.



\bibitem{Magajna}
{\sc B. Magajna}. Hilbert $C^*$-modules in which all closed submodules are complemented. {\em Proc. Amer. Math. Soc.} {\bf 125} (1997), 849--852.

\bibitem{MTBook}
{\sc V.~M.~Manuilov, E.~V.~Troitsky}.
\newblock Hilbert $C^*$-Modules.
\newblock AMS, Providence, R.I., 2005.

\bibitem{MT}
{\sc V. M. Manuilov, E. V. Troitsky}. Hilbert $C^*$- and $W^*$- Modules and Their Morphisms. {\em J. Math. Sci.} {\bf 98} (2000), 137--201.


\bibitem{Pas1}
{\sc W.~L.~Paschke}.
\newblock Inner product modules over $B^*$-algebras.
\newblock {\em Trans. Amer. Math. Soc.} {\bf 182} (1973), 443--468.

\bibitem{S-W}
{\sc K. Sait\^o, J. D. Maitland Wright}. Monotone Complete $C^*$-Algebras and Generic Dynamics. Springer, London, 2015.

\end{thebibliography}
\end{document}